\documentclass[12pt,oneside,reqno]{amsart}
\usepackage{amsmath}
\usepackage{graphicx}
\usepackage{mathrsfs}
\usepackage{stmaryrd}
\usepackage{amsfonts}
\usepackage{color}
\usepackage{enumerate,amsmath,amssymb,amsthm}
\pagestyle{plain}
\textwidth=160 mm
\textheight=240 mm
\oddsidemargin=0mm
\topmargin=-5mm

\newcommand{\be}{\begin{eqnarray}}
\newcommand{\ee}{\end{eqnarray}}
\newcommand{\ce}{\begin{eqnarray*}}
\newcommand{\de}{\end{eqnarray*}}
\newtheorem{theorem}{Theorem}[section]
\newtheorem{lemma}[theorem]{Lemma}
\newtheorem{remark}[theorem]{Remark}
\newtheorem{definition}[theorem]{Definition}
\newtheorem{proposition}[theorem]{Proposition}

\newtheorem{Example}[theorem]{Example}
\newtheorem{corollary}[theorem]{Corollary}

\def\[{{\Big[}}
\def\]{{\Big]}}
\def\<{{\langle}}
\def\>{{\rangle}}
\def\({{\Big(}}
\def\){{\Big)}}

\def\bx{{\mathbf{x}}}

\def\bt{\begin{theorem}}
\def\et{\end{theorem}}
\def\bl{\begin{lemma}}
\def\el{\end{lemma}}
\def\br{\begin{remark}}
\def\er{\end{remark}}
\def\bx{\begin{Example}}
\def\ex{\end{Example}}
\def\bd{\begin{definition}}
\def\ed{\end{definition}}
\def\bp{\begin{proposition}}
\def\ep{\end{proposition}}
\def\bc{\begin{corollary}}
\def\ec{\end{corollary}}

\def\geq{\geqslant}
\def\leq{\leqslant}

\allowdisplaybreaks

\begin{document}
\title{Stochastic Volterra integral equations with jumps and non-Lipschitz coefficients}
\author{Anas Dheyab Khalaf$^{1,2}$ and Xiangjun Wang$^{1}$}

\subjclass{}

\date{}
\dedicatory{$^{1}$ School of Mathematics and Statistics,
Huazhong University of Science and Technology,\\
Wuhan, Hubei 430074, P.R.China\\
$^{2}$ Department of Mathematics, college of computer science and mathematics,\\
Tikrit University, Tikrit, Iraq}

\keywords{Stochastic differential equations with jumps; L\'{e}vy process; Poisson random measure; Stochastic Volterra integral equations;  Non-Lipschitz condition.}

\thanks{CONTACT Anas Dheyab Khalaf. Email: anasdheyab@hust.edu.cn}
\begin{abstract}
Stochastic Volterra integral equations with jumps (SVIEs) have become very common and widely used in numerous branches of science, due to their connections with mathematical finance, biology, engineering and so on. In this paper, we apply the successive approximation method to investigate the existence and uniqueness of  solutions to the SVIEs driven by Brownian motion and compensated Poisson random measure under non-Lipschitz condition.
\end{abstract}

\maketitle

\section{introduction}
Stochastic differential equations (SDEs) with jumps offer the most flexible, numerically accessible mathematical framework to help model the evolution of financial and other random quantities through time.
In particular, feedback effects can be easily modeled and jumps enable us to frame events.
It is important to be able to incorporate event driven uncertainty into a model, and this can be expressed by jumps.
This arises, for instance, when one works on  credit risk, insurance risk, or operational risk.
SDEs enable us  to model the  feedback effects in the presence of jumps and independence on the level of
the state variable itself \cite{Bjork:1997, Geman:2006, Glasserman:1997, Schonbucher:2003}.

The Poisson process is usually used When one needs to handle the mathematical description of jumps, due to as its  advantage of counting events and generating an increasing sequence of jump times related to each event that it counts, and consequently, gives the number of jumps that have occurred up to any point in time. However, it is more appropriate to use Poisson measures driven-SDEs when events that have randomly distributed jumps are modeled, instead of Poisson processes, for complete exposition on this topic, we refer to the monographs  \cite{Elliott:1982,Rudiger:2004,Mandrekar:2015,Ren:2011,Albeverio:2010,Platen:2010}  for discussion of recent developments.
\\ In real life, many phenomena can be mathematically formulated by SVIEs. Therefore,  in past decades, SVIEs have attracted the attention of many scholars, who have investigated  the analytical and numerical solutions of such kind of equations, the main references are  \cite{Atkinson:1997,Brunner:1982, Brunner:2004, Tsokos:1974, Liang Hui:2017, Szynal:1988, Anas}.

Recently, \cite{Nacira:2018} studied the controlled stochastic Volterra integral equations with jumps \cite{Nacira:2018}, under a Lipschitz condition, they proved the existence and uniqueness of solutions to SVIEs with jumps under Lipschitz condition. Moreover, in \cite{Anas2} the authors of this paper joined Abouagwa M. and Almushaira M. and established the existence and uniqueness of solutions for the SVIEs under Lipschitz condition, and they provided numerical solutions for such equations by applying the Euler--Maruyama scheme.

Motivated by the aforementioned works, In the present paper, we apply the method of successive approximation, Bihari's inequality and Doob's martingale inequality to investigate  the existence and uniqueness of the solutions to the SVIEs driven by Brownian motion and pure jump L\`{e}vy motion with non-Lipschitz coefficients.

To finish this introduction, we note the general structure of this paper. In Section 2, we introduce some preliminaries and give a brief insight about the SVIEs, moreover we include some requisite lemmas. Section 3 is devoted to prove the existence and uniqueness theorem, lastly we give an example of the SVIE which has a unique solution.

\section{Some preparations }
Throughout this paper, unless otherwise specified, we let $(\Omega,\mathcal{F},\{\mathcal{F}_{t}\}_{t\geq{0}}  , P)$ to be a filtered probability space satisfying the usual conditions (i.e., it is increasing and right continuous while $\mathcal{F}_{0}$ contains all $P$-null sets).
Let $|\ .\ |$ be the Euclidean norm on $\mathbb{R}^{n}$.
\\Consider the stochastic Volterra integral equation with jumps of the form
\begin{align}\label{1}
\nonumber
x(t)=\varphi(t)+&\int_0^t{f(t,s,x(s))}ds+\int_0^t{g(t,s,x(s))}dW_{s}\\
+&\int_0^t\int_{R_{0}}{h(t,s,x(s),\xi)}\tilde{N}(ds,d\xi),
\end{align}
where the initial state $\varphi(t)$ is a given $\mathcal{F}_t$-adapted C\`{a}dl\`{a}g process, $E|\varphi(t)|^2<\infty$ for $t\in[0,T]$, $0<T<\infty$.
$ W(t)$ is Brownian motion, $\tilde{N}(dt,d\xi):=N(dt, d\xi)-\upsilon(d\xi)dt$ is a compensated Poisson random measure, $\upsilon$ represents the L\'{e}vy measure of the jump counting measure $N$, and $R_{0}=\mathbb{R}^n-\{0\}$. We impose  that $P(\int_0^t\int_{R_{0}}{|c(t,s,\xi)X(s)|^2}\upsilon(d\xi)ds<\infty)=1$.

Next, we give some requisite lemmas. the following lemma is taken from  \cite{AA}.
\begin{lemma}[Bihari's inequality]
Let $T>0$, $y_0\geq 0$, and $y(t), z(t)$ be continuous functions on $[0,T]$.
Assume that $\alpha:\mathbb{R}_+ \rightarrow \mathbb{R}_+$ ($\mathbb{R}_+$ is the set of of all nonnegative real
numbers) is a concave continuous non--decreasing function such that $\alpha(v)>0$ for $v>0$.
If
\begin{align}
\nonumber
y(t)\leq y_0+\int_0^t{z(s)\alpha(y(s))}ds \;\;\;\;\; \forall t\in[0,T],
\end{align}
then
\begin{align}
\nonumber
y(t)\leq G^{-1}(G(y_0)+\int_0^t{z(s)}ds) \;\;\;\;\;\;\; \forall t\in[0,T],
\end{align}
so that $(G(y_0)+\int_0^t{z(s)}ds)\in Dom(G^{-1})$, where $G(v)=\int_0^v{\frac{ds}{\alpha(s)}}$, $v>0$.
Moreover, if $y_0=0$ and $ \int_{0^+}{\frac{ds}{\alpha(s)}} = \infty$, then $y(t) = 0$ for all $t\in[0,T]$.
\end{lemma}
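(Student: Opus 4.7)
The plan is to reduce the integral inequality to a differential inequality for a slightly enlarged auxiliary function, integrate it explicitly via the change of variables encoded by $G$, and then handle the $y_0=0$ case by letting the perturbation tend to zero.

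First, for arbitrary $\epsilon>0$, I would set
\begin{equation*}
u_\epsilon(t):=y_0+\epsilon+\int_0^t z(s)\,\alpha(y(s))\,ds,
\end{equation*}
which is of class $C^1$ on $[0,T]$, satisfies $u_\epsilon(t)\geq y_0+\epsilon>0$, and dominates $y$ in the sense that $y(t)\leq u_\epsilon(t)-\epsilon<u_\epsilon(t)$. Since $\alpha$ is non-decreasing and $z\geq 0$ is implicit (being required for the right-hand side of the hypothesis to be a genuine upper bound), one obtains
\begin{equation*}
u_\epsilon'(t)=z(t)\,\alpha(y(t))\leq z(t)\,\alpha(u_\epsilon(t)).
\end{equation*}
Dividing by $\alpha(u_\epsilon(t))>0$ and integrating from $0$ to $t$ gives
\begin{equation*}
G(u_\epsilon(t))-G(y_0+\epsilon)\leq\int_0^t z(s)\,ds,
\end{equation*}
where $G(v)=\int_0^v ds/\alpha(s)$. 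Because $\alpha>0$ on $(0,\infty)$, $G$ is strictly increasing on $(0,\infty)$, so $G^{-1}$ is well defined on the range of $G$ and monotone; applying it to the inequality above yields
\begin{equation*}
y(t)\leq u_\epsilon(t)\leq G^{-1}\!\Big(G(y_0+\epsilon)+\int_0^t z(s)\,ds\Big),
\end{equation*}
provided the argument lies in $\mathrm{Dom}(G^{-1})$. Letting $\epsilon\to 0^+$ and invoking continuity of $G$ from the right at $y_0$ yields the first conclusion.

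For the final assertion, assume $y_0=0$ and $\int_{0^+}ds/\alpha(s)=\infty$, so that $G(\epsilon)\to-\infty$ as $\epsilon\to 0^+$. The bound above then gives
\begin{equation*}
0\leq y(t)\leq \lim_{\epsilon\to 0^+}G^{-1}\!\Big(G(\epsilon)+\int_0^t z(s)\,ds\Big)=0,
\end{equation*}
since $G^{-1}(-\infty)=0$; hence $y\equiv 0$ on $[0,T]$.

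The main obstacle I expect is the careful handling of the hypothesis $\big(G(y_0)+\int_0^t z(s)\,ds\big)\in\mathrm{Dom}(G^{-1})$: one must verify that all manipulations above remain valid uniformly in $t\in[0,T]$, which is precisely what this domain condition guarantees (it prevents the majorant from blowing up before time $T$). A secondary subtlety is the implicit sign condition $z\geq 0$, which is needed to preserve the direction of the inequality when $\alpha(y(s))$ is replaced by $\alpha(u_\epsilon(s))$, and the justification, in the second part, that $G^{-1}$ extends continuously to $-\infty$ with limit $0$, which follows from the divergence assumption $\int_{0^+}ds/\alpha(s)=\infty$.
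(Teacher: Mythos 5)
The paper does not actually prove this lemma: it is quoted verbatim from Bihari's 1956 paper (reference \cite{AA}) and used as a black box, so there is no in-paper argument to compare yours against. Your proof is the standard one for Bihari's inequality --- majorize $y$ by the $C^1$ function $u_\epsilon$, pass to the differential inequality $u_\epsilon'\leq z\,\alpha(u_\epsilon)$, separate variables via $G$, and send $\epsilon\to 0^+$ --- and it is correct. You rightly flag the two hypotheses the statement silently omits but genuinely needs: $z\geq 0$ (to preserve the inequality when replacing $\alpha(y)$ by $\alpha(u_\epsilon)$) and, for the final assertion, $y\geq 0$ (without it one can only conclude $y\leq 0$); both hold in the paper's applications, where $y$ is a second moment. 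The only cosmetic gap is the limit $\epsilon\to 0^+$ in the first conclusion when $y_0=0$ and $\int_{0^+}ds/\alpha(s)=\infty$, where $G(y_0)=-\infty$ and the displayed bound degenerates; but that case is exactly the one covered by the second assertion, so nothing is lost.
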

\begin{lemma}[Doob's martingale inequality]
If $ \{X(t)\}_{t\geq{0}}$ is a positive submartingale, then for any $p>1$ and for all $t>0$,
\begin{align}
\nonumber
E\left[\sup_{0\leq s \leq t}|X(s)|^p\right]\leq \left(\frac{p}{p-1}\right)^p E[|X(t)|^p].
\end{align}
\end{lemma}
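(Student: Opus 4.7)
The plan is to establish Doob's $L^p$ inequality by first proving the weaker maximal (Markov-type) inequality and then upgrading it to the $L^p$ bound via the layer-cake representation and Hölder's inequality. Writing $M(t):=\sup_{0\le s\le t}|X(s)|$ and using that $|X|$ (or $X$, since it is nonnegative) remains a submartingale, I would first show that for every $\lambda>0$,
\[
\lambda\, P\bigl(M(t)\ge\lambda\bigr)\le E\bigl[X(t)\,\mathbf{1}_{\{M(t)\ge\lambda\}}\bigr].
\]

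To get this maximal inequality, I would introduce the stopping time $\tau_\lambda:=\inf\{s\ge 0:X(s)\ge\lambda\}\wedge t$. By right-continuity of the sample paths, $X(\tau_\lambda)\ge\lambda$ on the event $\{M(t)\ge\lambda\}$, and optional sampling for the bounded stopping times $\tau_\lambda\le t$ gives $E[X(t)\mid\mathcal{F}_{\tau_\lambda}]\ge X(\tau_\lambda)$. Multiplying by the indicator of $\{M(t)\ge\lambda\}\in\mathcal{F}_{\tau_\lambda}$ and taking expectations produces the maximal inequality at once.

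Next, I would apply the layer-cake formula $E[M(t)^p]=p\int_0^\infty\lambda^{p-1}P(M(t)\ge\lambda)\,d\lambda$, use the maximal inequality inside the integral, and swap the order of integration by Fubini to obtain
\[
E[M(t)^p]\le p\int_0^\infty\lambda^{p-2}E\bigl[X(t)\mathbf{1}_{\{M(t)\ge\lambda\}}\bigr]d\lambda=\tfrac{p}{p-1}E\bigl[X(t)\,M(t)^{p-1}\bigr].
\]
Hölder's inequality with conjugate exponents $p$ and $p/(p-1)$ then bounds the right-hand side by $\tfrac{p}{p-1}\,E[X(t)^p]^{1/p}\,E[M(t)^p]^{(p-1)/p}$; dividing through and raising to the $p$-th power yields the stated inequality.

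The main technical obstacle is that the last division is legitimate only when $E[M(t)^p]<\infty$, so to avoid a vacuous cancellation by infinity I would first replace $M(t)$ by the truncation $M(t)\wedge n$, run the argument above to obtain a bound uniform in $n$, and then send $n\to\infty$ by monotone convergence. Everything else is a routine combination of optional sampling, Fubini, and Hölder, so this truncation step is the only place that requires real care.
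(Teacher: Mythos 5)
The paper does not actually prove this lemma; it simply cites Applebaum, Theorem 2.1.5, so there is no in-paper argument to compare against. Your proof is the classical one (weak-type maximal inequality via optional sampling, then layer-cake, Fubini and H\"older, with a truncation of $M(t)$ to justify dividing by $E[M(t)^p]^{(p-1)/p}$), and it is correct in all essentials; this is the same argument found in the cited reference. The only point worth tightening is the claim that $X(\tau_\lambda)\ge\lambda$ on $\{M(t)\ge\lambda\}$: if the supremum equals $\lambda$ but is approached only from below (or only as a left limit of a c\`adl\`ag path), right-continuity does not force $X(\tau_\lambda)\ge\lambda$. The standard fix is to prove the maximal inequality for the event $\{M(t)>\lambda\}$ (where the argument works verbatim) and note that $P(M(t)\ge\lambda)$ and $P(M(t)>\lambda)$ differ for at most countably many $\lambda$, which does not affect the layer-cake integral; with that remark your truncation step is indeed the only place requiring real care.
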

We refer to  \cite{Applebaum:2009}, theorem 2.1.5, to obtain the proof.
 \\

In the rest of this work, the following assumptions are imposed on the coefficients of Eq.~(\ref{1}). 

 \begin{enumerate}
\item[\textbf{$H_1$}\;-]  There exists a positive constant $C$ such that for all $0\leq s \leq t \leq T$, $ x\in \mathbb{R}^n $
\begin{equation}
\nonumber
|a(t,s)x|^2\vee|b(t,s)x|^2\vee\int_{R_{0}}|c(t,s,\xi)x|^2\upsilon(d\xi)\leq C(1+|x|^2)
\end{equation}
\item[\textbf{$H_2$}\;-] Let $a,\; b$ and $c$ be measurable real--valued functions, for each fixed $t \in [0,T]$ and for all $x,y \in \mathbb{R}^n$, there exist two  functions  $\lambda$ and $\kappa$, such that
\begin{align}
\nonumber
|f(t,s,x)-f(t,s,y)|^2&\vee|g(t,s,x)-g(t,s,y)|^2\\
\nonumber
&\vee\int_{R_{0}}|h(t,s,x,\xi)-c(t,s,y,\xi)|^2\upsilon(d\xi)\\
\nonumber
&\leq \lambda(t)[\kappa(|x-y|^2)].
\end{align}
where $\lambda\in L^2([0,T];\mathbb{R})$, and $\kappa(.)$ is  monotone non-decreasing, continuous and is a concave function, satisfying  $\kappa(0)=0$ and $\kappa(v)>0$, such that
\begin{align}
\nonumber
\int_{0^+}{\frac{dv}{\kappa(v)}}=\infty.
\end{align}

\end{enumerate}

\section{The main result}
In this section, we apply some technical tools from Stochastic integration with respect to compensated Poisson random measures and L\`{e}vy noise (see \cite{Elliott:1982,Rudiger:2004, Anas, Oksendal:2007}, for complete picture on this point) together with Lemma \textcolor[rgb]{1,0,0}{1} and Lemma \textcolor[rgb]{1,0,0}{2}, to provide  a systematic proof for the existence and uniqueness solution to Eq.~(\ref{1}) under non--Lipschitz condition. This is the massage of the next theorem.
\begin{theorem}Suppose that Assumptions \textcolor[rgb]{1,0,0}{$H_1$-$H_2$} hold. If  $\varphi$ is a monotonically increasing, then Eq.~(\ref{1}) has a unique solution.
\end{theorem}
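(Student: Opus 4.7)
The plan is to run a Picard-type successive approximation scheme and show convergence in $L^{2}(\Omega; C([0,T];\mathbb{R}^{n}))$ via Bihari's inequality. Set $x_{0}(t)=\varphi(t)$ and, for $n\geqslant 0$,
\begin{align*}
x_{n+1}(t)=\varphi(t)&+\int_{0}^{t}f(t,s,x_{n}(s))\,ds+\int_{0}^{t}g(t,s,x_{n}(s))\,dW_{s}\\
&+\int_{0}^{t}\!\!\int_{R_{0}}h(t,s,x_{n}(s),\xi)\,\tilde{N}(ds,d\xi).
\end{align*}
The argument splits into four stages: (i) uniform $L^{2}$ boundedness of the iterates on $[0,T]$; (ii) a Bihari-type recursive estimate for the successive differences; (iii) passage to the limit and verification that the limit solves (\ref{1}); and (iv) uniqueness by the same Bihari mechanism.

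\textbf{Step 1: Uniform bound.} Using $(a+b+c+d)^{2}\leqslant 4(a^{2}+b^{2}+c^{2}+d^{2})$, the Cauchy--Schwarz inequality for the Lebesgue integral, It\^o's isometry for the Brownian integral, the isometry for compensated Poisson integrals, Doob's martingale inequality (Lemma 2 with $p=2$) and the linear-growth hypothesis $H_{1}$, I would derive
\[
E\Big[\sup_{0\leqslant r\leqslant t}|x_{n+1}(r)|^{2}\Big]\leqslant K_{1}+K_{2}\int_{0}^{t}\Big(1+E\Big[\sup_{0\leqslant r\leqslant s}|x_{n}(r)|^{2}\Big]\Big)ds,
\]
where $K_{1},K_{2}$ depend only on $T$, $C$ and $E\sup_{t\leqslant T}|\varphi(t)|^{2}$. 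A standard induction in $n$ together with Gronwall's lemma then gives $\sup_{n}E[\sup_{t\leqslant T}|x_{n}(t)|^{2}]\leqslant M<\infty$, ensuring that each iterate is well-defined.

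\textbf{Step 2: Cauchy estimate and Bihari.} The crucial step is to show that $Z_{n}(t):=E[\sup_{0\leqslant r\leqslant t}|x_{n+1}(r)-x_{n}(r)|^{2}]$ satisfies
\[
Z_{n}(t)\leqslant C_{T}\int_{0}^{t}\lambda(s)\,\kappa\bigl(Z_{n-1}(s)\bigr)\,ds.
\]
After applying the same four-way decomposition together with the It\^o/Poisson isometries and Doob's inequality, $H_{2}$ produces $\lambda(s)\kappa(|x_{n}(s)-x_{n-1}(s)|^{2})$ inside the expectation. Here the main obstacle arises: to pull the expectation inside $\kappa$, I would invoke Jensen's inequality using concavity of $\kappa$, which is exactly the property supplied by $H_{2}$. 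Iterating the recursive bound and letting $n\to\infty$, Bihari's inequality (Lemma 1) applied to $\zeta(t):=\limsup_{n}Z_{n}(t)$ with $y_{0}=0$ and $\int_{0^{+}}dv/\kappa(v)=\infty$ forces $\zeta\equiv 0$. Hence $\{x_{n}\}$ is Cauchy in $L^{2}(\Omega;C([0,T];\mathbb{R}^{n}))$.

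\textbf{Step 3: Passage to the limit and uniqueness.} Denoting the limit by $x$, the uniform bound of Step 1 and dominated convergence (for the $ds$ term) together with the $L^{2}$-isometries (for the $dW$ and $\tilde{N}$ terms) let me pass to the limit in each integral of the iteration, showing $x$ solves (\ref{1}). For uniqueness, given two solutions $x,y$ with $x(0)=y(0)=\varphi(0)$, the same decomposition yields
\[
E\Big[\sup_{0\leqslant r\leqslant t}|x(r)-y(r)|^{2}\Big]\leqslant C_{T}\int_{0}^{t}\lambda(s)\,\kappa\Big(E\Big[\sup_{0\leqslant r\leqslant s}|x(r)-y(r)|^{2}\Big]\Big)ds,
\]
and a final appeal to Bihari's inequality (with $y_{0}=0$) gives $x\equiv y$ a.s. The monotonicity hypothesis on $\varphi$ is used only to guarantee measurability/integrability of $\sup_{t\leqslant T}|\varphi(t)|$ needed in Step 1; the core difficulty throughout is the Jensen step forcing the concavity of $\kappa$ to cooperate with the stochastic isometries while the Volterra kernel prevents a direct pathwise Gronwall argument.
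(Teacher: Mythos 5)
Your proposal follows essentially the same route as the paper: successive approximations, a uniform second-moment bound for the iterates, Cauchy--Schwarz/It\^o-isometry/Doob estimates on the three integral terms, Jensen's inequality to move the expectation inside the concave modulus $\kappa$, and Bihari's inequality (with $y_0=0$ and $\int_{0^+}dv/\kappa(v)=\infty$) to force both the successive differences and the difference of two solutions to vanish. The only cosmetic difference is that you apply Bihari directly to $\limsup_n Z_n$, whereas the paper first builds an auxiliary decreasing majorant sequence $\psi_k$ and passes to its limit; both devices accomplish the same thing.
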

In order to prove Theorem \textcolor[rgb]{1,0,0}{3.1}, we define a sequence of successive approximations $\{x^{k}, k= 1,2,...\}$ with $x^0(t)=\varphi(t)$, as follows:
 \begin{align}\label{2}
\nonumber
x^{k}(t)=\varphi(t)+&\int_0^t{f(t,s,x^{k-1}(s))}ds+\int_0^t{g(t,s,x^{k-1}(s))}dW_{s}\\
+&\int_0^t\int_{R_{0}}{h(t,s,x^{k-1}(s),\xi)}\tilde{N}(ds,d\xi), \;\;t\in[0,T], \;\;k= 1,2,...
\end{align}
\begin{lemma} With assumptions as in  Theorem \textcolor[rgb]{1,0,0}{3.1}, for all $t\in [0,T]$, there exists a positive constant $C_1$, such that
\begin{align*}
E|x^k(t)|^2\leq C_1, \;\;k=1,2,...
\end{align*}
\end{lemma}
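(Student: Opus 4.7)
The plan is to split the right-hand side of \eqref{2} into its four natural pieces, apply the elementary inequality $(a_1+a_2+a_3+a_4)^2\leq 4(a_1^2+a_2^2+a_3^2+a_4^2)$, and bound each piece by a standard second-moment estimate: Cauchy--Schwarz for the pathwise drift integral, the It\^o isometry for the Brownian stochastic integral, and the $L^2$ isometry for compensated Poisson random measures for the jump term. The linear-growth hypothesis $H_1$ (read with the obvious identification of $a,b,c$ as $f,g,h$) then converts these estimates into an integral recursion of the form
\begin{equation*}
E|x^k(t)|^2 \leq A + B\int_0^t E|x^{k-1}(s)|^2\,ds, \qquad k\geq 1,
\end{equation*}
with constants $A,B>0$ depending only on the bound $C$ in $H_1$, on $T$, and on $M:=\sup_{t\in[0,T]}E|\varphi(t)|^2$, which is finite since $\varphi$ is c\`adl\`ag and monotone on $[0,T]$.

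Concretely, after taking expectations the three stochastic contributions are controlled by
\begin{align*}
E\left|\int_0^t f(t,s,x^{k-1}(s))\,ds\right|^2 &\leq T\,E\!\int_0^t|f(t,s,x^{k-1}(s))|^2\,ds,\\
E\left|\int_0^t g(t,s,x^{k-1}(s))\,dW_s\right|^2 &= E\!\int_0^t|g(t,s,x^{k-1}(s))|^2\,ds,\\
E\left|\int_0^t\!\!\int_{R_0} h(t,s,x^{k-1}(s),\xi)\,\tilde N(ds,d\xi)\right|^2 &= E\!\int_0^t\!\!\int_{R_0}|h(t,s,x^{k-1}(s),\xi)|^2\,\upsilon(d\xi)\,ds,
\end{align*}
and $H_1$ bounds each integrand by $C(1+|x^{k-1}(s)|^2)$. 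Collecting terms yields the displayed recursion with, for instance, $A = 4M + 4C(T+2)T$ and $B = 4C(T+2)$.

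To close the induction on $k$, observe first that $E|x^0(t)|^2 = E|\varphi(t)|^2\leq M \leq A$. Assuming the bound $E|x^{k-1}(s)|^2 \leq A\sum_{j=0}^{k-1}(Bs)^j/j!$ for all $s\in[0,T]$, the recursion above produces
\begin{equation*}
E|x^k(t)|^2 \leq A + AB\int_0^t\sum_{j=0}^{k-1}\frac{(Bs)^j}{j!}\,ds = A\sum_{j=0}^{k}\frac{(Bt)^j}{j!} \leq A\,e^{BT},
\end{equation*}
so $C_1 := A\,e^{BT}$ is a valid uniform-in-$k$ bound. The only delicate point is to verify at each step that the integrands of the Brownian and Poisson stochastic integrals lie in the $L^2$-classes for which the corresponding isometries apply; this follows immediately from $H_1$ combined with the inductive finiteness of $E|x^{k-1}(s)|^2$, so there is no real obstacle beyond careful bookkeeping of constants.
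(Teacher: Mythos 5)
Your proposal is correct and follows essentially the same route as the paper: the four-term splitting with the inequality $|x_1+\dots+x_4|^2\leq 4\sum|x_i|^2$, Cauchy--Schwarz for the drift, the It\^o and compensated-Poisson isometries for the stochastic integrals, Assumption $H_1$ to obtain a linear integral recursion, and an induction producing the partial exponential sum $A\sum_{j\leq k}(Bt)^j/j!\leq Ae^{BT}$. The only differences are cosmetic bookkeeping of constants (the paper absorbs the factors into $\tilde T=\max\{T,1\}$ where you use $T+2$), plus your welcome explicit remark about checking the integrands lie in the relevant $L^2$-classes, which the paper leaves implicit.
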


\begin{proof}Let $\tilde{T}:= max\{T,1\}$. Here, for $k=1,2,...$, we shall show that
\begin{align} \label{3}
E|x^k(t)|^2\leq 4E|\varphi(T)|^2\sum_{\ell=0}^{k}{\frac{(4C\tilde{T})^{\ell}}{\ell!}t^{\ell}}+\sum_{\ell=1}^{k}{\frac{(4C\tilde{T})^{\ell}}{\ell!}t^{\ell}}.
\end{align}

First, for $k=1$, from Eq.~(\ref{2}) and using the following simple inequality
 \begin{align}\label{4}
|x_{1}+x_{2}+...+x_{m}|^2\leq m(|x_{1}|^2+|x_{2}|^2+...+|x_{m}|^2),
\end{align}
we get
\begin{align}
\nonumber
E|x^1(t)|^2\leq& 4E|x^0(t)|^2+4E\mid\int_0^t{f(t,s,x^0(s))}ds\mid^2\\
\nonumber
+&4E\mid\int_0^t{g(t,s,x^0(s))}dW_{s}\mid^2\\
\nonumber
+&4E\mid\int_0^t\int_{R_{0}}{h(t,s,x^0(s),\xi)}\tilde{N}(ds,d\xi)\mid^2.
\end{align}
\\ Employing Cauchy--Schwarz inequality, and It\^{o} isometry, we obtain
\begin{align}
\nonumber
E|x^1(t)|^2\leq& 4E|\varphi(t)|^2+4E\int_0^t{t|f(t,s,x^{0}(s))|^2}ds\\
\nonumber
+&4E\int_0^t{|g(t,s,x^{0}(s))|^2}ds\\
\nonumber
+&4E\int_0^t\int_{R_{0}}{|h(t,s,x^{0}(s),\xi)|^2}\upsilon(d\xi)ds.
\end{align}
In view of Assumption \textcolor[rgb]{1,0,0}{$H_1$}, and since $\varphi(t)$ is monotonically increasing,  one gets
\begin{align}
\nonumber
E|x^1(s)|^2\leq&4E|\varphi(t)|^2+4C\tilde{T}\int_0^t{(1+E|x^0(s)|^2)}ds\\
\nonumber
\leq&4E|\varphi(T)|^2+4C\tilde{T}(1+E|\varphi(T)|^2)t,
\end{align}

This proves (\ref{3}) in case of $k=1$.\\
Now, argue that (\ref{3}) holds for $k$. Then, inductively, we have for $k+1$
 \begin{align}
\nonumber
E|x^{k+1}(t)|^2\leq& 4E|\varphi(t)|^2+4TE\int_0^t{|f(t,s,x^{k}(s))|^2}ds\\
\nonumber
+&4E\int_0^t{|g(t,s,x^{k}(s))|^2}ds+4E\int_0^t\int_{R_{0}}{|h(t,s,x^{k}(s),\xi)|^2}\upsilon(d\xi)ds\\
\nonumber
\leq&4E|\varphi(t)|^2+4C\tilde{T}\int_0^t{(1+E|x^{k}(s)|^2)}ds\\
\nonumber
\leq&4E|\varphi(s)|^2+4C\tilde{T}\int_0^t{(1+4E|\varphi(T)|^2\sum_{\ell=0}^k{\frac{(4C\tilde{T})^{\ell}}{\ell!}{s} ^{\ell}}+\sum_{\ell=1}^k{\frac{(4C\tilde{T})^{\ell}}{\ell!}{s}^{\ell}})}ds\\
\nonumber
=&4E|\varphi(t)|^2+4C\tilde{T}t\\
\nonumber
+&4E|\varphi(T)|^2\sum_{\ell=1}^{k+1}{\frac{(4C\tilde{T})^{\ell}}{\ell!}t^{\ell}}+\sum_{\ell=2}^{k+1}{\frac{(4C\tilde{T})^{\ell}}{\ell!}t^{\ell}}\\
\nonumber
=&4E|\varphi(T)|^2\sum_{\ell=0}^{k+1}{\frac{(4C\tilde{T})^{\ell}}{\ell!}t^{\ell}}+\sum_{\ell=1}^{k+1}{\frac{(4C\tilde{T})^{\ell}}{\ell!}t^{\ell}}\\
\leq& 4E|\varphi(T)|^2\sum_{\ell=0}^{\infty}{\frac{(4C\tilde{T}^2)^{\ell}}{\ell!}}+\sum_{\ell=1}^{\infty}{\frac{(4C\tilde{T}^2)^{\ell}}{\ell!}}.
\end{align}
Hence, (\ref{3}) holds for all arbitrary $k$. And by Eq.(\ref{3}) one may get,
\begin{align}
\nonumber
E|x^{k}(t)|^2\leq4(1+E|\varphi(T)|^2)e^{4C\tilde{T}^2}.
\end{align}
This proves that  the sequence $\{x^k(t), k=1,2,...\}$ has a uniform bound on $[0,T]$. And this completes the proof of Lemma \textcolor[rgb]{1,0,0}{3.2}.
\end{proof}

\begin{lemma} let Assumptions \textcolor[rgb]{1,0,0}{$H_1-H_2$} are fulfilled, there exists a positive constant $C_3$ such that for all $0\leq t \leq T$, $k,m\geq 1$,
\begin{align}
\nonumber
E\left[\sup_{0\leq s \leq t}|x^{k+m}(s)-x^{k}(s)|^2\right]\leq C_3t.
\end{align}
\end{lemma}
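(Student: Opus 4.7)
The plan is to decompose $x^{k+m}(s)-x^k(s)$ into its three stochastic integral pieces and then bound each one using the growth condition $H_1$ combined with the uniform $L^2$ bound already established in Lemma 3.2, rather than using the non-Lipschitz assumption $H_2$. The reason is that the target estimate $C_3 t$ does not depend on $k,m$ at all, so a crude growth estimate suffices; the finer Cauchy-type estimate involving $\kappa$ and Bihari's inequality is evidently reserved for the subsequent step of the existence proof.

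First I would subtract the defining recursions (\ref{2}) for $x^{k+m}$ and $x^k$ and apply the elementary inequality (\ref{4}) with three terms to get
\begin{align*}
|x^{k+m}(s)-x^k(s)|^2 &\leq 3\Big|\int_0^s[f(s,u,x^{k+m-1}(u))-f(s,u,x^{k-1}(u))]du\Big|^2\\
&\quad + 3\Big|\int_0^s[g(s,u,x^{k+m-1}(u))-g(s,u,x^{k-1}(u))]dW_u\Big|^2\\
&\quad + 3\Big|\int_0^s\!\!\int_{R_0}[h(s,u,x^{k+m-1}(u),\xi)-h(s,u,x^{k-1}(u),\xi)]\tilde N(du,d\xi)\Big|^2.
\end{align*}
Then I would take $E\big[\sup_{0\leq s\leq t}\,\cdot\,\big]$ and bound each of the three pieces separately: Cauchy--Schwarz pulls out a factor $s\leq T$ on the drift term, while Doob's inequality (Lemma 2) combined with the It\^o and Poisson isometries converts the two stochastic integrals into $\int_0^s E|g(s,u,\cdot)|^2du$ and $\int_0^s\int_{R_0}E|h(s,u,\cdot,\xi)|^2\upsilon(d\xi)du$. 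Applying (\ref{4}) once more inside each integrand splits the squared difference into a sum of $|f(s,u,x^{k+m-1})|^2+|f(s,u,x^{k-1})|^2$, and similarly for $g$ and $h$; the growth bound $H_1$ then replaces each term by $C(1+|x^j(u)|^2)$. The uniform estimate $E|x^j(u)|^2\leq C_1$ from Lemma 3.2 makes the integrand bounded by $2C(1+C_1)$, independent of $k$ and $m$, so the three pieces combine to give at most a constant multiple of $t$, and the bound $C_3 t$ follows.

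The main technical obstacle is that the stochastic Volterra integrals $\int_0^s g(s,u,\cdot)dW_u$ and $\int_0^s\int_{R_0}h(s,u,\cdot,\xi)\tilde N(du,d\xi)$ are \emph{not} martingales in the upper limit $s$, because the integrand depends on $s$ through its first argument, so Doob's inequality does not apply directly. The customary workaround in the SVIE literature is to over-bound the supremum by freezing $s$ in the first argument when invoking Doob in the upper-limit variable and then taking the supremum afterwards, exploiting the fact that $H_1$ is uniform in $(t,s)$. With this device the scheme above goes through at the cost of only a slightly larger constant, and yields the claimed estimate with $C_3$ depending only on $T$, $C$, and $C_1$.
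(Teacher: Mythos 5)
Your proof is correct, but it reaches the bound $C_3t$ by a genuinely different route than the paper. The paper's proof applies the decomposition and Doob/It\^o-isometry step just as you do, but then invokes the non-Lipschitz assumption $H_2$ to bound the integrand by $\lambda(t)\,\kappa\bigl(|x^{k+m-1}(s)-x^{k-1}(s)|^2\bigr)$, uses Jensen's inequality (via concavity of $\kappa$) to move the expectation inside $\kappa$, and finally uses the uniform moment bound of Lemma 3.2 together with monotonicity of $\kappa$ to get $\kappa\bigl(E|x^{k+m-1}(s)-x^{k-1}(s)|^2\bigr)\leq\kappa(4C_1)$, a constant; integrating gives $C_3t$. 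You instead bypass $H_2$ entirely: you split $|f(\cdot,x^{k+m-1})-f(\cdot,x^{k-1})|^2\leq 2|f(\cdot,x^{k+m-1})|^2+2|f(\cdot,x^{k-1})|^2$, apply the linear growth condition $H_1$, and use Lemma 3.2 to make the integrand a constant. Both arguments are valid and both exploit the key observation that the crude order-$t$ bound needs no Cauchy-type cancellation; yours is slightly more elementary (no Jensen, no concavity of $\kappa$ needed), while the paper's keeps the estimate in the form $\int_0^t\kappa(\cdot)\,ds$ that it then reuses verbatim when setting up the iteration $\psi_{k+1}=\int_0^t\kappa(\psi_k)\,ds$ in Lemma 3.4. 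Your remark about Doob's inequality failing for Volterra integrands whose kernel depends on the upper limit is well taken: the paper silently sidesteps this by writing $f(t,u,\cdot)$ (with the outer $t$ frozen) inside the decomposition of $x^{k+m}(s)-x^k(s)$, which is exactly the unjustified freezing device you describe; flagging it explicitly is a genuine improvement in rigor over the paper's own exposition.
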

\begin{proof} By Eq.(\ref{2})  and the basic inequality (\ref{4}), we have
\begin{align}
\nonumber
E(\sup_{0\leq s \leq t}|x^{k+m}(s)-x^{k}(s)|^2)\leq&3E\left[\sup_{0\leq s \leq t}|\int_0^s{[f(t,u,x^{k+m-1}(u))-f(t,u,x^{k-1}(u))]}du|^2\right]\\
\nonumber
+&3E\left[\sup_{0\leq s \leq t}|\int_0^s{g(t,u,x^{k+m-1}(u))-g(t,u,x^{k-1}(u))]}dW_u|^2\right]\\
\nonumber
+&3E\left[\sup_{0\leq s \leq t}|\int_0^s{h(t,u,x^{k+m-1}(u),\xi)-h(t,u,x^{k-1}(u),\xi)]}\tilde{N}(du,d\xi)|^2\right].
\end{align}
Thanks to Cauchy--Schwarz inequality, Lemma \textcolor[rgb]{1,0,0}{2.2} and  Assumption \textcolor[rgb]{1,0,0}{$H_2$},
\begin{align}
\nonumber
E(\sup_{0\leq s \leq t}|x^{k+m}(s)-x^{k}(s)|^2)\leq&3TE\int_0^t{|f(t,s,x^{k+m-1}(s))-f(t,s,x^{k-1}(s))}|^2 ds\\
\nonumber
+&12E\int_0^t{|g(t,s,x^{k+m-1}(s))-g(t,s,x^{k-1}(s))|^2}ds\\
\nonumber
+&12E\int_0^t{|h(t,s,x^{k+m-1}(s),\xi)-h(t,s,x^{k-1}(s),\xi)|^2}\upsilon(d\xi)ds\\
\nonumber
\leq& 12\tilde{T} \lambda(t)E\int_0^t{\kappa(|x^{k+m-1}(s)-x^{k-1}(s))|^2}ds,
\end{align}
which, with the help of Jensen's inequality  and Lemma \textcolor[rgb]{1,0,0}{3.2}, gives

\begin{align}\label{5}
\nonumber
E(\sup_{0\leq s \leq t}|x^{k+m}(s)-x^{k}(s)|^2)\leq& 12\tilde{T}\lambda(t)\\
\nonumber
\times& \int_0^t{\kappa (E|x^{k+m-1}(s)-x^{k-1}(s)|^2)}ds\\
\leq&C_2\int_0^t{\kappa(4C_1)}ds \leq C_3t,
\end{align}
where $C_2:=12\tilde{T}\lambda(t)$.
\\  Hence Lemma \textcolor[rgb]{1,0,0}{3.3} is obtained.
\end{proof}
We now choose $v\in[0,T]$, $0\leq t \leq v$, such that $\kappa(C_3t) \leq C_3$, where $ \kappa(\eta)= C_2\eta$.
\\ Then, define the following sequences.
\begin{align}\label{6}
\nonumber
&\psi_{1}(t)=C_3t\\
&\psi_{k+1}(t)=\int_0^t{\kappa(\psi_{k}(s))}ds,\;\; k\geq 1\\
\label{7}
&\psi_{k,m}(t)=E(\sup_{0\leq s \leq t}|x^{k+m}(s)-x^{k}(s)|^2), \;\;\;\; k,\;m=1,2,...
\end{align}

\begin{lemma} There exists a positive $t\in[0,v]$ such that, for all $k,m\geq 1$, we have
\begin{align} \label{8}
0 \leq \psi_{k,m}(t)\leq \psi_{k}(t)\leq \psi_{k-1}(t)\leq ...
\leq \psi_{1}(t), \;\;\; \forall t \in[0,v].
\end{align}
\end{lemma}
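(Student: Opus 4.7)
The plan is to prove (\ref{8}) via two coupled inductions on $k$: first the deterministic claim that $\{\psi_k\}$ is non-increasing on a suitably small interval $[0,v]$, which gives the chain $\psi_k(t)\leq\psi_{k-1}(t)\leq\cdots\leq\psi_1(t)$; second the stochastic domination $\psi_{k,m}(t)\leq\psi_k(t)$, uniformly in $m$. Non-negativity of $\psi_{k,m}(t)$ is immediate from the definition (\ref{7}).

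For the choice of $v$, pick $v\in(0,T]$ small enough that $\kappa(C_3 v)\leq C_3$; this is possible because $\kappa$ is continuous with $\kappa(0)=0$. By the monotonicity of $\kappa$ this yields $\kappa(C_3 s)\leq C_3$ for every $s\in[0,v]$, and consequently
\begin{align*}
\psi_2(t)=\int_0^t \kappa(\psi_1(s))\,ds=\int_0^t \kappa(C_3 s)\,ds \leq C_3 t=\psi_1(t),\qquad t\in[0,v].
\end{align*}
A routine induction using monotonicity of $\kappa$ then propagates this: if $\psi_{k+1}(s)\leq\psi_k(s)$ on $[0,v]$, then $\psi_{k+2}(t)=\int_0^t \kappa(\psi_{k+1}(s))\,ds\leq\int_0^t \kappa(\psi_k(s))\,ds=\psi_{k+1}(t)$, which gives the deterministic part of the chain in (\ref{8}).

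For the stochastic comparison $\psi_{k,m}(t)\leq\psi_k(t)$, I would induct on $k$ with $m\geq 1$ arbitrary. The base case $k=1$ is exactly the conclusion of Lemma \textcolor[rgb]{1,0,0}{3.3}, which produces $\psi_{1,m}(t)\leq C_3 t=\psi_1(t)$. For the inductive step, I would rerun the chain of estimates from the proof of Lemma \textcolor[rgb]{1,0,0}{3.3} — the basic inequality (\ref{4}), Cauchy--Schwarz, Doob's martingale inequality (Lemma \textcolor[rgb]{1,0,0}{2.2}), Assumption $H_2$, and Jensen's inequality exploiting the concavity of $\kappa$ — but now retain the integrand $\kappa(\psi_{k,m}(s))$ rather than coarsely bounding it by $\kappa(4C_1)$. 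This yields the recursion
\begin{align*}
\psi_{k+1,m}(t)\leq \int_0^t \kappa\bigl(\psi_{k,m}(s)\bigr)\,ds,
\end{align*}
and combining with the inductive hypothesis and monotonicity of $\kappa$ gives $\psi_{k+1,m}(t)\leq\int_0^t \kappa(\psi_k(s))\,ds=\psi_{k+1}(t)$, closing the induction.

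The main obstacle is the bookkeeping of constants: the stochastic recursion naturally carries a factor $C_2=12\tilde{T}\lambda(t)$ in front of the integral, which must be absorbed into the definition of $\psi_{k+1}$ (equivalently, the $\kappa$ appearing in (\ref{6}) is to be read as $C_2\kappa$, as the parenthetical remark $\kappa(\eta)=C_2\eta$ preceding (\ref{6}) indicates). Once this convention is fixed, both inductions close on $[0,v]$ for every $k,m\geq 1$ and (\ref{8}) follows.
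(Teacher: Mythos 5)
Your proposal is correct and follows essentially the same route as the paper: induction on $k$, with Lemma 3.3 as the base case, the recursion $\psi_{k+1,m}(t)\leq\int_0^t\kappa(\psi_{k,m}(s))\,ds$ obtained by rerunning the estimates of Lemma 3.3, and the smallness condition $\kappa(C_3 t)\leq C_3$ on $[0,v]$ to start the deterministic chain $\psi_{k+1}\leq\psi_k$. Your observation that the factor $C_2=12\tilde{T}\lambda(t)$ must be absorbed into $\kappa$ (via the convention $\kappa(\eta)=C_2\eta$ stated before (\ref{6})) is exactly the bookkeeping the paper relies on implicitly; splitting the argument into two separate inductions rather than one interleaved induction is only an organizational difference.
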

\begin{proof} By Lemma \textcolor[rgb]{1,0,0}{3.3}, we have
\begin{align}
\nonumber
\psi_{1,m}(t)=E(\sup_{0\leq s \leq t}|x^{1+m}(s)-x^{1}(s)|^2) \leq C_3 t = \psi_{1}(t).
\end{align}
By the definition of $ \kappa$ and equations (\ref{6})-(\ref{7}), we have
\begin{align*}
\nonumber
\psi_{2,m}(t)=&E(\sup_{0\leq s \leq t}|x^{2+m}(s)-x^{2}(s)|^2)\\
\nonumber
\leq&C_2\int_0^t{E(\sup_{0\leq u \leq s}|x^{1+m}(u)-x^{1}(u)|^2)}ds\\
\leq& \int_0^t{\kappa(\psi_{1,m}(s))}ds\\
\leq& \int_0^t{\kappa(\psi_{1}(s))}ds := \psi_{2}(t).
\end{align*}
Then, we also have
\begin{align}
\nonumber
\psi_{2}(t)=\int_0^t{\kappa(\psi_{1}(s))}ds \leq \int_0^t{\kappa(C_3s)}ds\leq \int_0^t{C_3}ds=C_3t =\psi_{1}(t).
\end{align}
It has been shown that
\begin{align}
\nonumber
0 \leq \psi_{2,m}(t)\leq \psi_{2}(t)\leq \psi_{1}(t), \;\;\;\; \forall t\in[0,v].
\end{align}
Now, we suppose that (\ref{8}) holds for some $k$. Therefore, using the same inequalities above, yields
\begin{align}
\nonumber
\psi_{k+1,m}(t)\leq& C_2\int_0^t{E\sup_{0\leq u \leq s}|x^{k+m}(u)-x^{k}(u)|^2}ds\\
\nonumber
\leq&\int_0^t{\kappa(\psi_{k,m}(s))}ds\leq \int_0^t{\kappa(\psi_{k}(s))}ds:=\psi_{k+1}(t),\;\; 0\leq t \leq v.
\end{align}
On the other hand, we have
\begin{align}
\nonumber
\psi_{k+1}(t)= \int_0^t{\kappa(\psi_{k}(s))}ds \leq \int_0^t{\kappa(\psi_{k-1}(s))}ds:=\psi_k(t), \;\;\;\; \forall t\in[0,v].
\end{align}
This completes the proof.
\end{proof}

. \\ \textbf{Proof of Theorem \textcolor[rgb]{1,0,0}{3.1}. Existence.} We now argue that
\begin{align}
\nonumber
E(\sup_{0\leq s \leq t}|x^{k+m}(s)-x^{k}(s)|^2)\rightarrow 0, \;\;\; t\in[0,v],
\end{align}
as $k,m \rightarrow \infty$, noticing that $\psi_n$ is continuous on $[0,v]$ and for every $k \geq 1$, $\psi_n(.)$ is decreasing on $[0,v]$.
Furthermore, for each $t$, $\psi_n(t)$ is a decreasing sequence.
Hence, we can define the function $\psi(t)$ by
\begin{align} \label{9}
\psi(t)=\lim_{k\rightarrow\infty} \psi_k(t)=\lim_{k\rightarrow\infty} C_2\int_0^t{\psi_{k-1}(s)}ds= C_2\int_0^t{\psi(s)}ds,
\end{align}
for all $0\leq t \leq v$.
Consequently, since $\psi(t)$ is a continuous function on $[0,v]$, $\psi(0)=0$, then by (\ref{9}) and conditions \textcolor[rgb]{1,0,0}{$H_1-H_2$}, all the conditions of Lemma \textcolor[rgb]{1,0,0}{2.1} are satisfied, hence $\psi(t)=0$ for every $t\in[0,v]$.
Now, from Lemma \textcolor[rgb]{1,0,0}{3.4}, we get
\begin{align}
\nonumber
\psi_{k,m}(t)\leq \sup_{0\leq s \leq v} \psi_k(s)\leq \psi_k(v) \rightarrow 0, \;\;t\in[0,v]
\end{align}
as $ k \rightarrow \infty$, thus, $ \{x^k(t)\}^\infty _{k=1}$ is a Cauchy sequence on $L^2[0,T]$.
Then, by Lemma \textcolor[rgb]{1,0,0}{3.2}, we have
\begin{align}
\nonumber
E|x(t)|^2\leq C_1,
\end{align}
where $C_1$ is a positive constant.

By the above discussion, it is easy to conclude that, for all $t\in [0,v]$,
\begin{align}
\nonumber
&E|\int_0^t{[f(t,s,x^{k}(s))-f(t,s,x(s))]}ds|^2 \rightarrow 0\\
\nonumber
&E|\int_0^t{[g(t,s,x^{k}(s))-g(t,s,x(s))]}dW_s|^2 \rightarrow 0\\
\nonumber
&E|\int_0^t\int_{R_0}{[h(t,s,x^{k}(s),\xi)-h(t,s,x(s),\xi)]}\tilde{N}(ds,d\xi)|^2 \rightarrow 0,
\end{align}
as $k\rightarrow \infty$.
Taking limits on both sides of (\ref{2}), we get
\begin{align}
\nonumber
x(t)=&\varphi(t)+\int_0^t{f(t,s,x(s))}ds+\int_0^t{g(t,s,x(s))}dW_{s}\\
\nonumber
+&\int_0^t\int_{R_{0}}{h(t,s,x(s),\xi)}\tilde{N}(ds,d\xi),
\end{align}
and consequently $x(t)$ is a solution for Eq.~(\ref{1}) on $[0,v]$.
By iteration, the existence of solutions to Eq.~(\ref{1}) can be obtained on $[0,T]$.\\
\\ \textbf{ Uniqueness}: Assume that we have two solutions $ x(t)$ and $y(t)$ to Eq.~(\ref{1}) with $x(0)=y(0)$, and consider the setup from the previous  part, we get
\begin{align}
\nonumber
E|x(t)-y(t)|^2\leq&3E\left[t\int_0^t{|f(t,s,x(s))-f(t,s,y(s))|^2}ds\right]\\
\nonumber
+&3E\left[\int_0^t{|g(t,s,x(s))-g(t,s,y(s))|^2}ds\right]\\
\nonumber
+&3E\left[\int_0^t\int_{R_{0}}{|h(t,s,x(s),\xi)-h(t,s,y(s),\xi)|^2}\upsilon(d\xi)ds\right]\\
\nonumber
\leq&3\tilde{T}E\int_0^t[|f(t,s,x(s))-f(t,s,y(s))|^2+|g(t,s,x(s))-g(t,s,y(s))|^2\\
\nonumber
+&\int_{R_{0}}|h(t,s,x(s),\xi)-h(t,s,y(s),\xi)|^2\upsilon(d\xi)]ds.
\end{align}
Next, applying Jensen's inequality and Assumption \textcolor[rgb]{1,0,0}{$H_2$}, yields
\begin{align}
\nonumber
E|x(t)-y(t)|^2\leq 3\tilde{T}\lambda(t)\int_0^t{\kappa(E|x(s)-y(s))|^2}ds,
\end{align}
since we have $E|x(t)-y(t)|^2=0$ at $t=0$, and $\int_{0^+}^{\infty} \frac{dx}{x}=\infty $. Hence, by Lemma \textcolor[rgb]{1,0,0}{2.1}, we obtain
\begin{align}
\nonumber
E(\sup_{0\leq s \leq t}|x(s)-y(s)|^2)=0, \;\;\;\; \forall \; t\in[0,T].
\end{align}
Therefore, $x(t) = y(t)$, for all $t\in[0,T]$,
which proves uniqueness.\\
Thus, the proof of Theorem \textcolor[rgb]{1,0,0}{3.1} is completed.$\;\;\;\;\;\;\;\;\;\;\;\;\;\;\;\;\;\;\;\;\;\;\;\;\;\;\;\;\;\;\;\;\;\;\;\;\;\;\;\;\;\;\;\;\;\;\;\;\;\;\;\;\;\;\;\;\;\;\;\;\;\Box$

\begin{remark} In Theorem \textcolor[rgb]{1,0,0}{3.1}, under non--Lipschitz condition, we prove that the SVIEs have unique solutions. Moreover, in view of assumption \textcolor[rgb]{1,0,0}{$H_2$}, if we set $\lambda(t)\kappa(|x|)=L.|x|$, for some positive constant L, then in particular, we see that the Lipschitz condition is a special case of our proposed condition, In other words, we have generalised the existence and uniqueness results for SVIEs in []. 
\end{remark}
\begin{Example} \emph{Let us suppose that $W_{t} $ is a scalar Brownian motion, and $\tilde{N}(dt,d\xi)$ is a Poisson random measure with $\sigma$ --finite measure $\upsilon(d\xi)=\lambda f(\xi)d\xi$,  where $\lambda=2$ is the jump rate and $f(\xi) = \frac{1}{\sqrt{2\pi\xi}}\exp({-\frac{(\ln\xi)^{2}}{2}})$, $0< \xi< \infty$.
Note that, $W_{t} $ and $\tilde{N}(dt,d\xi)$ are assumed  to be independent.
\\ Consider the following SIVE 
\begin{align} \label{9}
\nonumber
x(t)=\varphi(t)+&\frac{1}{2}\int_0^t{x(s)}ds+4\int_0^t{cos^2(t-s)x(s)}dW_{s}\\
+&c\int_0^t\int_{R_{0}}{\xi^2 x(s)}\tilde{N}(ds,d\xi),\;\; \varphi(t)=1, \;\;\; t \geq 0.
\end{align}
Here $f(t,s,x) = \frac{1}{2}x$, $g(t,s,x) = 4cos^2(t-s)x$ and $h(t,s,x,\xi)=c\xi^2x$, $c>0$.
\\ Obviously, the conditions of Theorem  \textcolor[rgb]{1,0,0}{3.1} is satisfied. Then, the SVIE (\ref{9}) has a unique solution.}
\end{Example}

 \textbf{Funding}
\\ This study was financed in part by the National Natural Science Foundation of China under Grant No. 11531006.

\end{document}